\begin{document}

\newtheorem{thm}{Theorem}
\newtheorem{prop}[thm]{Proposition}
\newtheorem{llama}[thm]{Lemma}
\newtheorem{sheep}[thm]{Corollary}
\newtheorem{deff}[thm]{Definition}
\newtheorem{example}[thm]{Example}

\newcommand{\sthat}{\hspace{.1cm}| \hspace{.1cm}}
\newcommand{\id}{\operatorname{id} }
\newcommand{\acl}{\operatorname{acl}}
\newcommand{\dcl}{\operatorname{dcl}}
\newcommand{\irr}{\operatorname{irr}}
\newcommand{\aut}{\operatorname{Aut}}
\newcommand{\fix}{\operatorname{Fix}}

\newcommand{\oo}{\mathcal{O}}
\newcommand{\mm}{\mathcal{M}}

\title{An invitation to model-theoretic Galois theory.}
\author{Alice Medvedev \and Ramin Takloo-Bighash}
\thanks{The second author's work is partially supported by NSF grant DMS-0701753 and a research grant from the University of Illinois at Chicago.}
\address{Department of Mathematics, Statistics, and Computer Science \\
University of Illinois at Chicago \\
322 Science and Engineering Offices (M/C 249) \\
851 S. Morgan Street \\
Chicago, IL 60607-7045}
\email{alice@math.uic.edu}

\address{Department of Mathematics, Statistics, and Computer Science \\
University of Illinois at Chicago \\
322 Science and Engineering Offices (M/C 249) \\
851 S. Morgan Street \\
Chicago, IL 60607-7045}
\email{rtakloo@math.uic.edu}

\begin{abstract}
 We carry out some of Galois' work in the setting of an arbitrary first-order theory $T$. We replace the ambient algebraically closed field by a large model $\mm$ of $T$, replace fields by definably closed subsets of $\mm$, assume that $T$ codes finite sets, and obtain the fundamental duality of Galois theory matching subgroups of the Galois group of $L$ over $F$ with intermediate extensions $F \leq K \leq L$. This exposition of a special case of \cite{poizat} has the advantage of requiring almost no background beyond familiarity with fields, polynomials, first-order formulae, and automorphisms.
\end{abstract}

\maketitle

\section{Introduction.}

 Two hundred years ago, \'{E}variste Galois contemplated symmetry groups of solutions of polynomial equations, and Galois theory was born.
   Thirty years ago, Saharon Shelah found it necessary to work with theories that \emph{eliminate imaginaries}; for an arbitrary theory, he constructed a canonical definitional expansion with this property in \cite{ClaThe}.
  Poizat immediately recognized the importance of a theory already having this property in its native language; indeed, he defined ``elimination of imaginaries'' in \cite{poizat}.
  It immediately became clear (see \cite{poizat}) that much of Galois
theory can be developed for an arbitrary first-order theory that eliminates imaginaries. This model-theoretic version of
Galois theory can be generalized beyond finite or even infinite algebraic extensions,
and this can in turn be useful in other algebraic settings such as the study of Galois groups
of polynomial differential equations (already begun in \cite{poizat}) and linear difference
equations. On a less applied note, it is possible to bring further ideas into the model-theoretic setting, as is done in \cite{cohoanand} for the relation between Galois cohomology and homogeneous spaces.

 Here we rewrite parts of Galois' work in the language of model
theory, a special case of \cite{poizat}. Like Galois, we only
treat finite extensions, while \cite{poizat} addresses
finitely-generated infinite extensions, including those generated
by a solution of a differential equation.
 A nice exposition of the more general theory, as
well as all the model-theoretic prerequisites, can be found in
\cite{poimth}. This paper is the result of collaboration between a
number theorist who wanted to learn model theory, and a logician
who wanted to remember Galois theory. As such, it is entirely
elementary in both algebra and logic, and should be accessible to
anyone with any undergraduate background in both. It can also
motivate an algebraist to learn a little bit of logic, or
enlighten a logician about a bit of algebra.
 \texttt{new} In the interest of this accessibility, we do not give a proper treatment to elimination of imaginaries, and do not describe the construction, for a fixed theory $T$, of the definitional expansion $T^{eq}$ that eliminates imaginaries. That would require multisorted logic, which, though no harder than the usual one-sorted kind, is rarely taught in introductory courses. \texttt{endnew}
 Before we launch into the details, let us say which parts of Galois theory we replicate,
and which are lost.

 We see fields as definably-closed substructures of models of
the theory of algebraically closed fields, rather than as models
of the theory of fields. This is necessary because otherwise it
may be impossible to amalgamate several finite extensions into a
normal extension, and indeed it is not even clear what it would
mean for an extension to be normal. Thus not every first-order
theory can play the role of the theory of fields. In this paper, we define the basic Galois-theoretic notions for an arbitrary theory in place of the theory of
algebraically closed fields. However, the fundamental duality of Galois theory requires a harmless technical condition, namely coding finite sets (a special case of elimination of imaginaries).
 We fix an arbitrary theory $T$; where
Galois worked inside the complex numbers, we work inside
a sufficiently saturated model $\mm$ of $T$, as is usual in modern
model theory. Thus, all of our models are elementary submodels of
$\mm$ and all of our sets are subsets of $\mm$.

 Here are the parts of Galois theory that we replicate.
 We define the \emph{degree} of an extension, the
\emph{automorphism group} of an extension, and \emph{splitting}
and \emph{normal} extensions. For a normal extension $F \leq L$,
we prove the fundamental duality between intermediate extensions
$F \leq K \leq L$ and subgroups of the Galois group of $L$ over
$F$. Next, we show that $K$ is normal over $F$ if and only if the
corresponding subgroup is normal. In this case the Galois group of
$K$ over $F$ is the appropriate quotient group.

Here are the parts of Galois theory that one cannot hope to have
in this generality.
 Our language may have no function symbols, and our theory
may not admit quantifier elimination, so we must replace
polynomials by arbitrary first-order formulae. Similarly, in our
setting an extension $L$ of $K$ need not be a $K$-vector space.
Indeed, there is no reason to have a definable bijection between
$L$ and some cartesian power of $K$. This rules out the
possibility of defining the degree of an extension in terms of the
dimension of a vector space, and precludes norms, traces,
characters, and discriminants. The conceptual interpretation of the solvability of the Galois group is also lost, as there is no analog of radicals, and no conceptual characterization of cyclic extensions.

 Let us now actually define all these words and prove these statements.

\section{The Fundamental Duality.}

 Notation:\begin{itemize}
\item We work in a $\kappa$-saturated model $\mm$ of $T$; we say
that a subset $A \subset \mm$ is \emph{small} if $|A| < \kappa$.
\item  Given an $L$-formula $\phi(x,y)$ and a tuple $a \in \mm$, we say that another tuple $b \in \mm$ is a \emph{solution} of $\phi(a,y)$ if $\mm \models \phi(a,b)$.
\item  Given $A \subset \mm$, $L(A)$ is the language $L$ augmented by new constant symbols, one for each element of $A$; we naturally expand $\mm$ to an $L(A)$-structure by interpreting the new constant symbols as the corresponding elements of $A$.
\item  For a substructure $B \leq \mm$ and a subset $A \subset B$, we denote by $\aut(B/A)$ the group of \emph{partial elementary} maps from $B$ to $B$ fixing $A$ pointwise. (A partial elementary map preserves \emph{all} first-order properties, unlike a partial isomorphism, which only preserves atomic formulae.)
\item Unless otherwise specified, letters may denote finite tuples. Thus $a \in A$ should be read as ``$a$ is a tuple of elements of $A$.''
\end{itemize}

\begin{deff}
 Given a small $A \subset \mm$ and a tuple $b \in \mm$, we say that $b \in \acl(A)$ ($b$ is algebraic over $A$, or $b$ is in the algebraic closure of $A$) if there is an $L(A)$-formula $\phi(y)$ such that $b$ is one of finitely many solutions of $\phi(y)$. If $b$ is the \emph{only} solution of $\phi(y)$, we say that $b \in \dcl(A)$ ($b$ is definable over $A$, or $b$ is in the definable closure of $A$).\\
 If in addition there is no $L(A)$-formula $\psi(y)$ such that $\psi(y)$ still has $b$ as a solution and has fewer solutions than $\phi(y)$, we call $\phi(y)$ an \emph{irreducible formula of $b$ over $A$}, denoted $\irr(b/A)$.\\
 Given a small $A \subset \mm$ and a tuple $b \in \mm$, we define $\oo(b/A)$ to be the orbit of $b$ under $\aut(\mm/A)$.\\
  Given $b \in \acl(A)$, we define \emph{the degree of $b$ over $A$} to be
 $\deg(b/A) := | \oo(b/A) |$.
\end{deff}

 Clearly, $\irr(b/A)$ exists for any $b \in \acl(A)$; although many formulae may fit this definition, they all have the
same solution set; so we often abuse notation and speak of \emph{the} formula $\irr(b/A)$. Note also $\irr(b/A)$ is equivalent to $\irr(b/\dcl(A))$. It is easy to check that $\acl$ and $\dcl$ are indeed closure operators on subsets of $\mm$. It is well-known (and easy to show) that $b \in \acl(A)$ if and only if $\oo(b/A)$ is finite, and in that case $\oo(b/A)$ is the solution set of $\irr(b/A)$, so the degree of $b$ over $A$ is the number of solutions of $\irr(b/A)$. For fields in characteristic zero, this degree is precisely the degree of the usual Galois theory, while in positive characteristic it is the separable degree. It is also clear that the degree is preserved under interdefinability over $A$, that is $\deg(c/A) = \deg(b/A)$ for any tuple $c \in \dcl(Ab)$ such that $b \in \dcl(Ac)$, which allows us to define the degree of a finite extension.

\begin{deff}
 Given $A \subset B \subset \mm$, we say that $B$ is a \emph{finite extension of $A$} if there is a tuple $b$ of elements of $B \cap \acl(A)$ such that $B \subset \dcl(Ab)$; we say that $b$ \emph{generates $B$ over $A$}; we define the \emph{degree of $B$ over $A$} to be $\deg(B/A) := \deg(b/A)$.

 If in addition $\oo(c/A) \subset B$ for every tuple $c \in B$, we say that $B$ is a \emph{normal extension of $A$}. If there is some $b \in B$ such that $\oo(b/A) \subset B$ and $B \subset \dcl(A \cup \oo(b/A))$, we say that $B$ is the \emph{splitting extension of $\irr(b/A)$ over $A$}.
\end{deff}

\begin{llama}
 A definably closed splitting extension is normal.
\end{llama}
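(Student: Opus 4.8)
The plan is to use definable closedness to re-express $B$ as an $\aut(\mm/A)$-invariant definable closure, and then read off normality directly.

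First I would fix a witnessing tuple $b \in B \cap \acl(A)$ with $\oo(b/A) \subset B$ and $B \subset \dcl(A \cup \oo(b/A))$ (note $\oo(b/A)$ is finite since $b \in \acl(A)$, so $A \cup \oo(b/A)$ is small and the definable closure makes sense). Since $B$ is definably closed and $A \cup \oo(b/A) \subset B$, we get $\dcl(A \cup \oo(b/A)) \subset B$ as well, hence $B = \dcl(A \cup \oo(b/A))$. This is the only place definable closedness is used, and it is essential: without it a tuple $c \in B$ could be moved by some automorphism into $\dcl(A \cup \oo(b/A)) \setminus B$.

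The key step is to show that every $\sigma \in \aut(\mm/A)$ maps $B$ onto $B$. By definition $\oo(b/A)$ is the $\aut(\mm/A)$-orbit of $b$, so $\sigma$ permutes $\oo(b/A)$; thus $\sigma$ fixes the set $A \cup \oo(b/A)$ setwise (and $A$ pointwise). Since $\dcl$ of a small set depends only on the set, and since an elementary bijection $\sigma$ carries $\dcl(X)$ onto $\dcl(\sigma(X))$ — if $d$ is the unique solution of an $L$-formula $\phi(y,\bar a)$ with $\bar a \in X$, then $\sigma(d)$ is the unique solution of $\phi(y,\sigma(\bar a))$, and conversely via $\sigma^{-1}$ — we obtain
\[
\sigma(B) = \sigma\big(\dcl(A \cup \oo(b/A))\big) = \dcl\big(\sigma(A) \cup \sigma(\oo(b/A))\big) = \dcl(A \cup \oo(b/A)) = B.
\]
Finally, to conclude normality, let $c \in B$ be an arbitrary tuple: for every $\sigma \in \aut(\mm/A)$ we have $\sigma(c) \in \sigma(B) = B$, so $\oo(c/A) \subset B$, which is exactly the definition of $B$ being a normal extension of $A$.

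The only point that needs care — the ``main obstacle,'' modest as it is — is that $\sigma$ \emph{permutes} rather than pointwise-fixes $\oo(b/A)$, so one must argue setwise and use that $\dcl$ is a function of the generating set, not of a chosen enumeration of it; everything else is bookkeeping. One could also give a more hands-on variant: enumerate $\oo(b/A) = \{b_1,\dots,b_n\}$, write a given $c \in B$ as the unique solution of some $L(A)$-formula $\psi(x,b_1,\dots,b_n)$, and observe that applying $\sigma$ replaces $(b_1,\dots,b_n)$ by a permutation of itself, so the defining parameters of $\sigma(c)$ still lie in $B$.
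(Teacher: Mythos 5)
Your proof is correct and follows essentially the same route as the paper: identify $B = \dcl(A \cup \oo(b/A))$ using definable closedness, note that $\aut(\mm/A)$ fixes $A \cup \oo(b/A)$ setwise and hence fixes $B$ setwise, and conclude that $B$ contains every orbit it meets. You merely spell out the details (that $\sigma(\dcl(X)) = \dcl(\sigma(X))$ and that $\sigma$ permutes the orbit) that the paper leaves implicit.
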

\begin{proof}
 Let $B = \dcl(B)$ be the splitting extension of $\irr(b/A)$ over $A$, that is, $B = \dcl(A \cup \oo(b/A))$. Now $B$ must be $\aut(\mm/A)$-invariant because $\oo(b/A)$ is. Therefore it contains any $\aut(\mm/A)$-orbit it intersects.
\end{proof}

\begin{llama}
 Degrees of finite extensions multiply in towers. That is, if $A \subset B \subset C$ are finite extensions, then $\deg(C/A) = \deg(C/B) \cdot \deg(B/A)$.
\end{llama}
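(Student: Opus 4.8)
The plan is to pick compatible generators and then count the orbit of their concatenation by fibering it over the orbit of the smaller generator. First I would choose a tuple $c \in C \cap \acl(B)$ with $C \subset \dcl(Bc)$, witnessing that $C$ is a finite extension of $B$, and a tuple $b \in B \cap \acl(A)$ with $B \subset \dcl(Ab)$, witnessing that $B$ is a finite extension of $A$. Since $b \in \acl(A)$ we get $B \subset \dcl(Ab) \subset \acl(A)$, so $c \in \acl(B) \subset \acl(\acl(A)) = \acl(A)$ because $\acl$ is a closure operator; and $C \subset \dcl(Bc) \subset \dcl(Abc)$. Hence the concatenation $bc$ is a generator of $C$ over $A$, and since the degree of a finite extension does not depend on the chosen generator, $\deg(C/A) = \deg(bc/A) = |\oo(bc/A)|$. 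Note all these orbits are finite, as $b$, $c$, $bc$ are algebraic over the relevant sets.

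Next I would study the projection $\pi \colon \oo(bc/A) \to \oo(b/A)$ sending each $\aut(\mm/A)$-conjugate $(b',c')$ of $(b,c)$ to its first coordinate $b'$. It is well defined and surjective: for $\sigma \in \aut(\mm/A)$ the point $\sigma(b,c) = (\sigma b, \sigma c)$ lies over $\sigma b$, and every element of $\oo(b/A)$ has this shape. The heart of the proof is to identify the fiber over $b$ with $\oo(c/B)$, i.e. to show $(b,c') \in \oo(bc/A)$ if and only if $c' \in \oo(c/B)$. For the forward implication, suppose $\sigma \in \aut(\mm/A)$ satisfies $\sigma(b,c) = (b,c')$; then $\sigma$ fixes $A$ and $b$, hence fixes $\dcl(Ab) \supseteq B$ pointwise, so $\sigma \in \aut(\mm/B)$ and $c' = \sigma c \in \oo(c/B)$. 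Conversely, if $\tau \in \aut(\mm/B)$ has $\tau c = c'$, then $\tau$ fixes $b$ (as $b \in B$) and fixes $A$ (as $A \subset B$), so $\tau(b,c) = (b,c')$ exhibits $(b,c')$ in $\oo(bc/A)$. Thus $|\pi^{-1}(b)| = |\oo(c/B)| = \deg(C/B)$.

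Finally I would check that all fibers of $\pi$ have the same size. For $b' = \sigma b$ with $\sigma \in \aut(\mm/A)$, the assignment $(b',c'') \mapsto \sigma^{-1}(b',c'') = (b, \sigma^{-1} c'')$ is a bijection $\pi^{-1}(b') \to \pi^{-1}(b)$, using only that $\oo(bc/A)$ is $\aut(\mm/A)$-invariant. Summing over the $|\oo(b/A)| = \deg(B/A)$ fibers then gives $\deg(C/A) = |\oo(bc/A)| = \deg(B/A)\cdot \deg(C/B)$.

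The step I expect to require the most care is the forward direction of the fiber identification: that an element of $\aut(\mm/A)$ fixing the generator $b$ is automatically in $\aut(\mm/B)$. This is exactly where the hypothesis $B \subset \dcl(Ab)$ is used, together with the basic fact that automorphisms fixing a set pointwise fix its definable closure pointwise; it is what prevents the second coordinate from ranging over the full orbit over $A$ and confines it to the orbit over $B$. A minor additional point, absorbed by the $\kappa$-saturation (and homogeneity) of $\mm$, is that partial elementary maps defined on our small tuples extend to elements of $\aut(\mm/\cdot)$, so that these automorphism orbits behave as expected.
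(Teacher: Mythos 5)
Your proof is correct, and its skeleton is the same as the paper's: generate $C$ over $A$ by the concatenation $bc$, partition $\oo(bc/A)$ according to the first coordinate (your fibers $\pi^{-1}(d)$ are exactly the paper's sets $S_d$), and identify the fiber over $b$ with $\oo(c/B)$ via the observation that an automorphism over $A$ fixing $b$ fixes $\dcl(Ab) \supseteq B$ pointwise. Where you genuinely diverge is the step showing all fibers are equinumerous: you transport $\pi^{-1}(b')$ to $\pi^{-1}(b)$ by composing with $\sigma^{-1}$ for a fixed $\sigma \in \aut(\mm/A)$ sending $b$ to $b'$, using only the $\aut(\mm/A)$-invariance of the orbit, whereas the paper argues by definability: it takes $\phi(y,z)$ with $\phi(b,z) = \irr(c/Ab)$, notes that $b$ satisfies $\exists_{!n} z\, \phi(y,z)$, and concludes that every $d \in \oo(b/A)$ has a fiber of the same size $n$ (implicitly using that orbits are the solution sets of irreducible formulae). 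Your route is more elementary and purely group-theoretic, avoiding irreducible formulae altogether; the paper's route pays for the extra machinery by also exhibiting $\irr(bc/A)$ explicitly as $\theta(y) \wedge \phi(y,z)$, a byproduct your argument does not give but which the lemma does not need. Your closing remarks (dependence of degree only on the generator up to interdefinability, transitivity of $\acl$ to see $c \in \acl(A)$, and saturation/homogeneity to realize partial elementary maps as automorphisms) are exactly the facts the paper records before the lemma, so nothing is missing.
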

\begin{proof}
 Let $b$ generate $B$ over $A$, and let $c$ generate $C$ over $B$. Clearly, the concatenation $bc$ generates $C$ over $A$. We need to show that
$|\oo(bc/A)| = |\oo(b/A)| \cdot |\oo(c/B)|$.
 For $d \in \oo(b/A)$, let $$S_d := \{ (d, \sigma(c)) \sthat
\sigma \in \aut(\mm/A) \mbox{ and } \sigma(b)=d \}$$
 Clearly $\oo(bc/A) = \cup_{d \in \oo(b/A)} S_d$ is a disjoint union of $\deg(b/A)$-many sets $S_d$. Since $S_b = \{ (b,\sigma(c)) \sthat \sigma \in \aut(\mm/Ab) \}$, it is the same size as $\oo(c/B)$. It suffices to show that $|S_d| = |S_b|$ for all $d \in \oo(b/A)$. This is true because the size of $S_d$ is a definable property of $d$: if $\phi(y,z)$ is the $L(A)$-formula such that $\phi(b,z) = \irr(c/Ab)$, then $b$ satisfies
$\psi(y) := \exists_{!n} z\, \phi(y,z)$, and so all $d \in \oo(b/A)$ must satisfy it too. If $\theta(y) = \irr(b/A)$, it is clear that $\irr(bc/A) = \theta(y) \wedge \phi(y,z)$.
\end{proof}

\begin{llama}
 If $B = \dcl(Ab)$ is a finite extension of $A$, then
$|\aut(B/A)| = |B \cap \oo(b/A)|$.
\end{llama}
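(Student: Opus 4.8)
The plan is to show that the evaluation map $\sigma \mapsto \sigma(b)$ is a bijection from $\aut(B/A)$ onto the finite set $B \cap \oo(b/A)$; this immediately gives the claimed equality of cardinalities. Two of the three verifications are routine. The map does land in $B \cap \oo(b/A)$: for $\sigma \in \aut(B/A)$ we have $\sigma(b) \in B$ since $\sigma$ maps $B$ to $B$, and $\sigma(b)$ satisfies $\irr(b/A)$ since $\sigma$ is elementary and fixes $A$ pointwise, so $\sigma(b) \in \oo(b/A)$ as the latter is the solution set of $\irr(b/A)$. The map is injective because $B = \dcl(Ab)$: every $c \in B$ is the unique solution of some $L(A)$-formula $\rho(b,z)$, and an elementary map fixing $A$ must send such a $c$ to the unique solution of $\rho(\sigma(b),z)$, so $\sigma$ is completely determined by the single value $\sigma(b)$.

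The real content is surjectivity. Given $d \in B \cap \oo(b/A)$, I must exhibit $\sigma \in \aut(B/A)$ with $\sigma(b) = d$. By the very definition of $\oo(b/A)$ as the orbit of $b$ under $\aut(\mm/A)$, there is some $\tau \in \aut(\mm/A)$ with $\tau(b) = d$. The one thing that could fail is that $\tau$ need not carry $B$ onto $B$: since $\tau$ is elementary, $\tau(B) = \tau(\dcl(Ab)) = \dcl(Ad)$, which is contained in $B$ (because $d \in B$ and $B$ is definably closed) but could a priori be proper. Hence it suffices to prove the claim that $b \in \dcl(Ad)$; given this, $\dcl(Ad) = \dcl(Ab) = B$, so $\tau|_{B}$ is an automorphism of $B$ fixing $A$ pointwise with $\tau|_{B}(b) = d$, as wanted.

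To establish $b \in \dcl(Ad)$, I apply the previous lemma (degrees multiply in towers) to the tower $A \subseteq \dcl(Ad) \subseteq B$; this is legitimate since $d, b \in \acl(A)$, and $B = \dcl((Ad)\cup\{b\})$ exhibits $B$ as a finite extension of $\dcl(Ad)$ generated by $b$. On the one hand $\deg(B/A) = \deg(b/A) = |\oo(b/A)|$. On the other hand, since $d \in \oo(b/A)$ forces $\oo(d/A) = \oo(b/A)$, we get $\deg(\dcl(Ad)/A) = \deg(d/A) = |\oo(b/A)| = \deg(B/A)$. Multiplicativity of degrees then forces $\deg(b/\dcl(Ad)) = \deg(B/\dcl(Ad)) = 1$, that is, $b \in \dcl(Ad)$.

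I expect this surjectivity step to be the main obstacle: the tempting move is simply to conjugate $b$ to $d$ inside $\mm$, but that conjugation must be promoted to an automorphism of $B$ itself, and the only way this can work is if $b$ and $d$ are interdefinable over $A$ — a fact that does not follow from $d \in \dcl(Ab)$ on its own and that I extract from the tower law via the degree count above. All the supporting facts I use (that $\dcl$ and $\acl$ are closure operators, that $b,d \in \acl(A)$ so every extension in sight is finite, that degree is unchanged when a base set is replaced by its definable closure) are exactly the observations already recorded in the discussion following the definitions, and I would invoke them without further comment.
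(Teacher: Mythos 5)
Your proposal is correct and follows essentially the same route as the paper: the same evaluation map $\sigma \mapsto \sigma(b)$, the same injectivity argument from $B = \dcl(Ab)$, and the same surjectivity step, where a global conjugate $\tau \in \aut(\mm/A)$ sending $b$ to $d$ is shown to restrict to an automorphism of $B$ because $\dcl(Ad) \subseteq B$ has the same degree over $A$ as $B$, forcing equality via the tower lemma. Your write-up merely makes explicit the degree computation that the paper compresses into ``Thus $\sigma(B) = B$ by the previous lemma.''
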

\begin{proof}
 It suffices to construct a bijection between the two sets of allegedly the same size.
 Let $f : \aut(B/A) \rightarrow (B \cap \oo(b/A))$ be defined by $f(\sigma) := \sigma(b)$.
If $f(\sigma) = f(\tau)$, then $\sigma(b) = \tau(b)$, and so
$\sigma \circ \tau^{-1}$ is identity on $\dcl(Ab) = B$. Thus $f$
is injective. Given some $b' \in B \cap \oo(b/A)$ let $\sigma \in
\aut(\mm/A)$ be such that $\sigma(b) = b'$. Now
$$\sigma(B) = \sigma(\dcl(Ab)) = \dcl(\sigma(Ab)) = \dcl(A\sigma(b)) = \dcl(Ab')\subset B$$
is a definably closed subset of $B$ containing $A$ of the same
degree over $A$ as $B$. Thus $\sigma(B) = B$ by the previous
lemma, so $\sigma|_B = f^{-1}(b')$ and $f$ is surjective.
\end{proof}

\begin{sheep}
For a finite extension $B \supset A$, $\deg(B/A) = |\aut(B/A)|$ if
and only if $B$ is a normal extension of $A$.
\end{sheep}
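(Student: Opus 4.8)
The plan is to read both directions of the equivalence directly off the bijection $\sigma \mapsto \sigma(b)$ of the preceding lemma, which identifies $\aut(B/A)$ with $B \cap \oo(b/A)$, and to settle the substantive (reverse) implication by exhibiting $B$ as a splitting extension, so that the earlier lemma asserting that a definably closed splitting extension is normal applies. I first fix a tuple $b$ generating $B$ over $A$; since the preceding lemma requires $B = \dcl(Ab)$, I assume throughout that $B$ is definably closed, so that indeed $B = \dcl(Ab)$. Then, by definition, $\deg(B/A) = \deg(b/A) = |\oo(b/A)|$.

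Next I would note that $B \cap \oo(b/A) \subseteq \oo(b/A)$ and that both sets are finite, since $b \in \acl(A)$. Hence the preceding lemma gives $|\aut(B/A)| = |B \cap \oo(b/A)| \leq |\oo(b/A)| = \deg(B/A)$, with equality if and only if $\oo(b/A) \subseteq B$. So the corollary reduces to showing that $\oo(b/A) \subseteq B$ if and only if $B$ is normal over $A$.

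One direction is immediate from the definition of normality: if $B$ is normal then, $b$ being in particular a tuple of elements of $B$, its orbit $\oo(b/A)$ lies in $B$. For the converse, suppose $\oo(b/A) \subseteq B$. Since $b \in \oo(b/A)$ we get the chain $B = \dcl(Ab) \subseteq \dcl(A \cup \oo(b/A)) \subseteq \dcl(B) = B$, so $B = \dcl(A \cup \oo(b/A))$. By definition $B$ is then the splitting extension of $\irr(b/A)$ over $A$, and being definably closed it is normal by the earlier lemma.

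The only real content — and the step I expect to require the most care — is this last implication: normality demands $\oo(c/A) \subseteq B$ for \emph{every} tuple $c \in B$, not merely for the chosen generator, and closing that gap is exactly the orbit-closure argument packaged in the splitting-extension lemma (which itself rests on the invariance of $\dcl$ applied to an $\aut(\mm/A)$-invariant set). Everything else is routine: unwinding the definitions of degree and of finite extension, plus the observation that a subset of a finite set equals it precisely when the cardinalities coincide.
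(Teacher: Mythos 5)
Your argument is correct, but only for the special case you quietly added: the corollary, and the paper's definition of a finite extension, do \emph{not} assume that $B$ is definably closed --- the definition only asks for a tuple $b \in B \cap \acl(A)$ with $B \subset \dcl(Ab)$. Your opening move (``I assume throughout that $B$ is definably closed, so $B = \dcl(Ab)$'') therefore proves a weaker statement. The excluded case is not vacuous: the most natural normal extensions, such as $B = A \cup \oo(b/A)$, are typically not definably closed, and for them the preceding lemma $|\aut(B/A)| = |B \cap \oo(b/A)|$ is simply unavailable, since it requires $B = \dcl(Ab)$. Your reduction ``$\deg(B/A)=|\aut(B/A)|$ iff $\oo(b/A) \subseteq B$'' also genuinely fails without definable closedness: in $\mathrm{ACF}_0$ with $A=\dcl(\mathbb{Q})$, the set $B = A \cup \{\sqrt2, -\sqrt2, 1+\sqrt2\}$ is generated by $b=\sqrt2$, contains all of $\oo(b/A)$, yet is not normal and has $|\aut(B/A)|=1 < 2 = \deg(B/A)$. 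So the definably closed hypothesis is doing real work in your proof, not just streamlining notation.

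To prove the statement as given you must bridge between $B$ and $B' := \dcl(Ab)$, which is exactly what the paper does: for the forward direction it shows that when $B$ is normal the restriction map $\aut(B'/A) \to \aut(B/A)$ is a well-defined bijection (well-defined because a normal $B$ is invariant under automorphisms, surjective because any element of $\aut(B/A)$ extends to $\mm$, injective because $\sigma \in \aut(B'/A)$ is determined by $\sigma(b)$ and $b \in B$), so $|\aut(B/A)| = |\aut(B'/A)| = |\oo(b/A)| = \deg(B/A)$; for the converse it counts directly, $|\aut(B/A)| \leq |B \cap \oo(b/A)| \leq |\oo(b/A)|$, with strict inequality when $B$ is not normal. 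Within the definably closed case your treatment is fine, and your reverse direction --- showing $B = \dcl(A \cup \oo(b/A))$ and invoking the lemma that a definably closed splitting extension is normal --- is a pleasant alternative to the paper's counting; but as written the proposal has a genuine gap for non--definably-closed $B$.
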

\begin{proof}
Let $b \in B \cap \acl{A}$ be such that $B \subset B' :=
\dcl(Ab)$. Suppose that $B$ is a normal extension of $A$. Then
$\deg(B/A) = \deg(B'/A) = |\oo(b/A)|$ by definition, and
$|\oo(b/A)| = |\aut(B'/A)|$ by the last lemma. It suffices to show
that the restriction map from $\aut(B'/A)$ to $\aut(B/A)$ is
well-defined and bijective. Since any automorphism of $B$ can be
extended to an automorphism of $B'$ (and indeed of all of $\mm$),
the restriction is surjective. Since $B$ is normal, it is clearly
invariant under automorphisms. Since $B' = \dcl(Ab)$, an
automorphism $\sigma \in \aut(B'/A)$ is completely determined by
$\sigma(b)$ and a fortiori by the restriction of $\sigma$ to $B$,
so that restriction is injective.

Note that any automorphism $\sigma \in \aut(B/A)$ is determined by
$\sigma(b)$, so
$$|\aut(B/A)| \leq |B \cap \oo(b/A)| \leq |\oo(b/A)|$$
 with the last inequality strict if $B$ is not a
normal extension of $A$.
\end{proof}

Note that if $A \subset B \subset C$ and $C$ is normal over $A$,
then $C$ is also normal over $B$, as orbits of $\aut(C/B)$ are
subsets of orbits of $\aut(C/A)$.

\begin{sheep}
 If $A \subset B \subset C$ and $B$ and $C$ are normal over $A$, then
$|\aut(C/A)| = |\aut(C/B)| \cdot |\aut(B/A)|$ and in fact
 $$ 0 \rightarrow \aut(C/B) \rightarrow \aut(C/A) \rightarrow \aut(B/A) \rightarrow 0$$
is exact, so $\aut(C/B)$ is a normal subgroup of $\aut(C/A)$.
\end{sheep}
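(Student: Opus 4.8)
The plan is to realize the sequence concretely: the left-hand map $\aut(C/B)\to\aut(C/A)$ is simply the inclusion — a partial elementary self-map of $C$ fixing $B$ pointwise certainly fixes the smaller set $A$ — so it is injective, and the right-hand map $\rho\colon\aut(C/A)\to\aut(B/A)$ is restriction, $\rho(\sigma):=\sigma|_B$. Granting that $\rho$ is a well-defined homomorphism, the rest is bookkeeping: $\ker\rho$ is precisely the set of $\sigma\in\aut(C/A)$ fixing $B$ pointwise, i.e.\ $\aut(C/B)$, which gives exactness at $\aut(C/B)$ and at $\aut(C/A)$ and exhibits $\aut(C/B)$ as a kernel, hence a normal subgroup of $\aut(C/A)$.

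So there are really two things to check. First, well-definedness of $\rho$, which is where normality of $B$ over $A$ is used: given $\sigma\in\aut(C/A)$, extend it to $\hat\sigma\in\aut(\mm/A)$ (possible by saturation/homogeneity of $\mm$, exactly as in the proof of the preceding corollary); then for $b\in B$ we have $\hat\sigma(b)\in\oo(b/A)\subseteq B$ by normality, so $\hat\sigma(B)\subseteq B$, and the same applied to $\hat\sigma^{-1}$ gives $\hat\sigma(B)=B$; thus $\sigma|_B=\hat\sigma|_B$ is a partial elementary self-map of $B$ fixing $A$, i.e.\ an element of $\aut(B/A)$, and compatibility with composition is then immediate. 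Second, surjectivity of $\rho$ — the only step with any real content — which is where normality of $C$ over $A$ enters: given $\tau\in\aut(B/A)$, extend it to $\hat\tau\in\aut(\mm/A)$; since $C$ is normal over $A$, the same orbit argument shows $\hat\tau(C)=C$, so $\sigma:=\hat\tau|_C\in\aut(C/A)$ and $\rho(\sigma)=\hat\tau|_B=\tau$.

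Finally, $|\aut(C/A)|=|\aut(C/B)|\cdot|\aut(B/A)|$ is the evident counting consequence of the short exact sequence, all three groups being finite (by the lemma on $\aut(B/A)$, each has cardinality at most the degree of a generating tuple). As a cross-check it also follows from the tower law for degrees together with the corollary identifying degree with $|\aut|$ for normal extensions: $C$ is normal over $B$ by the remark just before the statement, so $|\aut(C/A)|=\deg(C/A)=\deg(C/B)\cdot\deg(B/A)=|\aut(C/B)|\cdot|\aut(B/A)|$. I do not expect a genuine obstacle here; the one thing to be careful about is invoking both hypotheses in the right places — $B$ normal over $A$ for well-definedness of $\rho$, $C$ normal over $A$ for its surjectivity — and keeping the extension-of-partial-elementary-maps step at the level already licensed by the earlier proofs.
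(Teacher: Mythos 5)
Your proposal is correct and follows essentially the same route as the paper: inclusion on the left, restriction on the right (well-defined because normality of $B$ over $A$ makes $B$ invariant under $\aut(C/A)$), kernel equal to $\aut(C/B)$, and surjectivity by extending an element of $\aut(B/A)$ to $\aut(\mm/A)$ and restricting to the normal extension $C$. You merely spell out the extension-and-invariance details (and the counting consequence) that the paper's terse proof leaves implicit.
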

\begin{proof}
 Naturally, $\aut(C/B) \subset \aut(C/A)$. Since $B$ is normal, it is $\aut(C/A)$-invariant, so restriction gives a surjective homomorphism $\aut(C/A) \rightarrow \aut(B/A)$ whose kernel clearly is $\aut(C/B)$.
\end{proof}

\begin{deff}
 Suppose that $C=\dcl(C)$ is a finite extension of $A=\dcl(A)$, and $G := \aut(C/A)$.
 If $H$ is a subgroup of $G$, we let
 $$\fix(H) := \{ c \in C \sthat \forall h \in H \, h(c) = c \}$$ be the set of elements of $C$ fixed pointwise by every element of $H$.

 If $A \subset B \subset C$, we let $\fix(B) := \{ h \in G \sthat \forall b\in B\, h(b)=b\}$ be the subgroup of $G$ of elements that fix $B$ pointwise.
\end{deff}

 Note that for any subgroup $H$, the set $\fix(H)$ is definably closed.

\begin{llama}
 If $A \subset B \subset C$ are definably closed, and $C$ is a normal extension of $A$, then $H := \aut(C/B)$ is normal in $G := \aut(C/A)$ if and only if $B$ is a normal extension of $A$.
\end{llama}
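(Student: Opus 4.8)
The plan is to handle the two implications separately. The forward direction --- if $B$ is a normal extension of $A$, then $H$ is a normal subgroup of $G$ --- costs nothing: with $A \subseteq B \subseteq C$ and both $B$ and $C$ normal over $A$, the corollary producing the exact sequence $0 \to \aut(C/B) \to \aut(C/A) \to \aut(B/A) \to 0$ applies verbatim, and part of its conclusion is precisely that $\aut(C/B)$ is a normal subgroup of $\aut(C/A)$.

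For the converse the strategy is: assuming $H$ normal in $G$, show that $B$ is invariant under every element of $\aut(\mm/A)$, which is exactly the definition of $B$ being normal over $A$. Fix $\sigma \in \aut(\mm/A)$; since $C$ is normal over $A$, $\sigma$ carries $C$ onto $C$, so after replacing $\sigma$ by its restriction we may take $\sigma \in G$, and it is enough to prove $\sigma(B) = B$. The first step is an easy computation: if $b$ generates $B$ over $A$ then $\sigma(B) = \dcl(A\sigma(b))$ is again a definably closed set with $A \subseteq \sigma(B) \subseteq C$, and conjugation by $\sigma$ turns a map fixing $B$ pointwise into one fixing $\sigma(B)$ pointwise (and conversely), so $\sigma H \sigma^{-1} = \aut(C/\sigma(B))$. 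Normality of $H$ in $G$ now gives $\aut(C/\sigma(B)) = H = \aut(C/B)$; in particular every element of $\aut(C/B)$ fixes $\sigma(B)$ pointwise, that is, $\sigma(B) \subseteq \fix(\aut(C/B))$.

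The remaining --- and really the only substantial --- step is the identity $\fix(\aut(C/B)) = B$, one half of the Galois correspondence. Since it is not among the results quoted above, I would prove it here, perhaps isolating it as a preliminary observation. The inclusion $B \subseteq \fix(\aut(C/B))$ is immediate from the definitions. For the reverse inclusion, take $c \in C$ with $c \notin B$; as $B = \dcl(B)$ we have $c \notin \dcl(B)$, so by the standard description of the definable closure in the saturated, homogeneous model $\mm$ there is $\tau \in \aut(\mm/B)$ with $\tau(c) \neq c$, and since $C$ is normal over $A$ --- hence over $B$, by the remark following the first corollary --- $\tau$ maps $C$ onto $C$, so $\tau|_C \in \aut(C/B)$ moves $c$; thus $c \notin \fix(\aut(C/B))$. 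Combining this with the previous paragraph gives $\sigma(B) \subseteq B$, and running the same argument with $\sigma^{-1}$ (or comparing cardinalities) upgrades this to $\sigma(B) = B$, which completes the proof. The main obstacle is exactly that identity $\fix(\aut(C/B)) = B$; everything around it is bookkeeping, and even the identity is a short argument once one invokes the homogeneity of $\mm$.
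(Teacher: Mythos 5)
Your proof is correct, but it takes a genuinely different route from the paper's. For the nontrivial direction the paper argues by contrapositive and stays entirely at the level of group elements: from a witness $b \in B$ with $\oo(b/A) \not\subset B$ it picks $c, d \in \oo(b/A) \setminus B$ with $d \in \oo(c/B)$, then exhibits $h \in H$ with $h(c)=d$ and $g \in G$ with $g(b)=c$, and checks directly that $g^{-1}hg(b) \neq b$, so $g^{-1}hg \notin \aut(C/B)$. You instead argue directly: the conjugation identity $\sigma H \sigma^{-1} = \aut(C/\sigma(B))$ plus normality of $H$ forces $\sigma(B) \subseteq \fix(\aut(C/B))$, and you then prove $\fix(\aut(C/B)) = B$ via homogeneity of $\mm$. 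That last identity is exactly the ``easy half'' of the Fundamental Duality, which the paper only proves later, inside Theorem \ref{thethm} (the $\fix(\fix(B)) = B$ part), by the same homogeneity argument you give; since that half needs no coding of finite sets, there is no circularity and no illicit appeal to the hypothesis of the theorem. The trade-off: the paper's argument is shorter and completely self-contained, producing an explicit non-normality witness, while yours is more structural and makes visible \emph{why} normality of the subgroup corresponds to invariance of $B$, at the cost of importing a fragment of the duality. Two small remarks. First, your aside ``if $b$ generates $B$ over $A$'' is not justified by the hypotheses (an intermediate definably closed set is not assumed to be a finite extension of $A$), but it is also not needed: $\sigma(B) = \sigma(\dcl(B)) = \dcl(\sigma(B))$ for any $\sigma \in \aut(\mm)$, and the conjugation identity needs no generator at all. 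Second, the upgrade from $\sigma(B) \subseteq B$ to $\sigma(B) = B$ is likewise superfluous, since $\sigma(B) \subseteq B$ for all $\sigma \in \aut(\mm/A)$ already says $\oo(b/A) \subset B$ for every tuple $b \in B$, which is the paper's definition of normality.
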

\begin{proof}
 The last corollary proves one direction, so we need to prove the other. Suppose that $B$ is not normal, that is there is some $b \in B$ with $\oo(b/A) \not\subset B$. Since
$B$ is definably closed, and $\oo(b/A)$ is $B$-definable (since it is $A$-definable), there must be at least two elements $c,d \in \oo(b/A)$ not in $B$, and we may further assume that $d \in \oo(c/B)$.
 We now find $h \in H$ and some $g \in G$ such that $g^{-1}hg \notin H$. We will take $h$ witnessing $d \in \oo(c/B)$, that is such that $h(c) = d$. We will take $g$ witnessing that $c \in \oo(b/A)$, that is such that $g(b) = c$. Now $h(g(b)) = h(c) = d$, and since $d \neq c$, $g^{-1} (d) \neq g^{-1}(c) = b$, so $g^{-1}hg(b) \neq b$, so $g^{-1}hg(b) \notin H = \aut(C/B)$, as wanted.
\end{proof}

We now define a special case of elimination of imaginaries that is necessary for the fundamental duality of Galois theory. Recall that $\mm$ is a monster model of our theory $T$.

\begin{deff}
 We say that $T$ \emph{codes finite sets of tuples} if for any $n \in \mathbb{N}$ and for any finite $F \subset \mm^n$ there is some tuple $b$ such that for any automorphism $\sigma \in \aut(\mm)$ we have $\sigma(b) = b$ if and only if $\sigma(F) = F$. We call $b$ the \emph{code} of $F$.
\end{deff}

 \texttt{new}: Full elimination of imaginaries is equivalent to the same condition for all, not necessarily finite, definable (with parameters) sets $F$. For example from algebraic geometry, the code of a variety is (the finite tuple of generators of) its field of definition. \texttt{endnew}
  It is clear that the code $b$ of $F$ is well-defined up to interdefinability, and that if $F$ is $A$-definable, then $b \in \dcl(A)$.  We only use the coding of finite sets of tuples once, where the tuple is the generator of a finite extension.

 It is well-known that the theory of algebraically closed fields
codes finite sets of tuples: a set of $m$ $n$-tuples is coded by
the tuple of elementary multi-symmetric polynomials.
 Let $S$ be a set of $m$ $n$-tuples of variables; a polynomial $P$ in all these variables is multi-symmetric in $S$ if $P$ is invariant under permutations of $S$; $P$ is elementary multi-symmetric if it is monic, homogeneous, of degree at most $m$ in each variable.
 For example,
$(a+b+c, ab+ac+bc, abc)$ is a code of $\{a,b,c\}$ and $(a+c, ac ,
b+d, bd, ad+bc)$ is a code for $\{ (a,b), (c,d) \}$. A complete
proof would be neither short nor beautiful\footnote{``Dans la troisi\`{e}me section, il faut donc prouver cette \'{e}limination des imaginaires pour les corps alg\'{e}briquement clos; l'argument essential est un exercice sur les fonctions sym\'{e}triques (Lemme 5) dont l'auteur, sans doute par manque de culture, n'a pas trouv\'{e} de traces dans la litt\'{e}rature; il doit s'agir d'un r\'{e}sultat "bien connu", un de ceux dont on n'ose publier une d\'{e}monstration qu'en cas d'absolue n\'{e}cessit\'{e}.'' \cite{poizat} }.

 Here is an example of a theory that fails to code finite sets. In it, there is no Galois correspondence between subgroups of $\aut(C/A)$ and intermediate definably closed sets $B$ with $A\subset B \subset C$.

\begin{example}
Take a language $L$ with two binary relations $R$ and $S$, and let
$\mm$ be an $L$-structure with an infinite universe containing
points $a$, $b$, $c$, and $d$. Let $R^\mm := \{ (a,b), (b,a),
(c,d), (d,c) \}$ and $S^\mm := \{ (a,c), (c,a), (b,d), (d,b) \}$.
 Note that this theory does not code finite sets; in
particular, the unordered sets $\{a, b\}$ and $\{c, d\}$ are not
interdefinable with any tuples. Let $A$ be anything disjoint from
$\{a,b,c,d\}$; then $C := \{ a,b,c,d \} \cup A$ is a finite
extension of $A$. Then $\aut(C/A)$ contains four elements: an
automorphism may fix all four points, or it may switch two
disjoint pairs of them, for example switching $a$ with $d$ and $b$
with $c$. However, there are no definably closed $B$ with $A
\subset B \subset C$ except for $A$ and $C$.
\end{example}

That is the only obstruction.

\begin{thm}
\label{thethm}
 Suppose that $T$ codes finite sets, that $C=\dcl(C)$ is a normal extension of $A=\dcl(A)$, and $G := \aut(C/A)$. Then there is a bijection between subgroups of $G$ and intermediate definably closed extensions given by associating a subgroup $H$ to the set $\fix(H)$, and a set $B$ to the subgroup $\fix(B)$.
\end{thm}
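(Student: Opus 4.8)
The plan is to show the two assignments $H \mapsto \fix(H)$ and $B \mapsto \fix(B)$ are mutually inverse. Well-definedness was essentially already noted: $\fix(H)$ is definably closed and lies between $A$ and $C$ (every element of $G$ fixes $A$ pointwise), and for $A \subseteq B \subseteq C$ the set $\fix(B)$ is exactly the subgroup $\aut(C/B)$ of $G = \aut(C/A)$. So the whole content is the two round-trip identities $\fix(\fix(B)) = B$ and $\fix(\fix(H)) = H$. Throughout I would fix a tuple $b \in C \cap \acl(A)$ with $C = \dcl(Ab)$: such $b$ exists because $C$ is a finite extension of $A$, and equality $C = \dcl(Ab)$ holds, rather than mere containment, because $b \in C = \dcl(C)$. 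The same $b$ witnesses that $C$ is a finite extension of every intermediate definably closed $B$, since $b \in \acl(A) \subseteq \acl(B)$ and $C = \dcl(Ab) \subseteq \dcl(Bb) \subseteq \dcl(C) = C$.

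First, $\fix(\fix(B)) = B$. The inclusion $B \subseteq \fix(\fix(B))$ is trivial. Conversely take $c \in C \setminus B$. Then $c \in \acl(B)$ (as $C$ is a finite extension of $B$) while $c \notin \dcl(B) = B$, so $\oo(c/B)$ has more than one element; pick $\sigma \in \aut(\mm/B)$ with $\sigma(c) \neq c$. Since $C$ is normal over $A$ it is normal over $B$, so $\sigma$ carries each element of $C$ into its orbit under $\aut(\mm/B)$, which lies in $C$; hence $\sigma(C) \subseteq C$, and likewise $\sigma^{-1}(C) \subseteq C$, so $\sigma|_C \in \aut(C/B) = \fix(B)$. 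As $\sigma|_C$ moves $c$, we get $c \notin \fix(\fix(B))$. Note that this direction uses only normality and definable closure.

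Second, $\fix(\fix(H)) = H$, and this is where coding of finite sets enters. The inclusion $H \subseteq \aut(C/\fix(H)) = \fix(\fix(H))$ is trivial. For the reverse, form the finite set $Hb := \{\, h(b) \sthat h \in H \,\}$ of tuples from $C$ and let $e$ be a code for it. Because $H$ is a subgroup it permutes $Hb$, so the defining property of the code (applied to extensions of the $h \in H$ to automorphisms of $\mm$) gives $h(e) = e$ for all $h \in H$; and since $Hb$ is definable with parameters in $C$ (its own finitely many elements), its code satisfies $e \in \dcl(C) = C$. Hence $e \in \fix(H)$. Now take any $g \in \fix(\fix(H)) = \aut(C/\fix(H))$; then $g$ fixes $e$, hence fixes $Hb$ setwise, so $g(b) \in Hb$, say $g(b) = h(b)$ with $h \in H$. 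Then $h^{-1}g$ fixes $b$, hence fixes $\dcl(Ab) = C$ pointwise, so $g = h \in H$. (Running the same argument with $e$ in place of $b$ also shows $\fix(H) = \dcl(Ae)$ is a finite extension of $A$, so it really is an intermediate extension in the intended sense.)

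I expect the second identity to be the main obstacle. Everything else is bookkeeping over the lemmas and corollaries already proved, but producing a single element $e \in \fix(H)$ whose stabilizer in $G$ is exactly $H$ is precisely what coding buys: without it nothing in $C$ need single out the $H$-orbit $Hb$ among subsets of $\oo(b/A)$, and the preceding Example shows that in that case the correspondence genuinely fails. With the code $e$ in hand the argument closes at once.
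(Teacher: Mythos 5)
Your proof is correct and follows essentially the same route as the paper's: the identity $\fix(\fix(B))=B$ via an automorphism of $\mm$ over $B$ moving any $c\notin\dcl(B)$ and restricting to $C$ by normality, and $\fix(\fix(H))=H$ via the code of the $H$-orbit of a generator of $C$ over $A$ (your direct form ``$g$ fixes the code $\Rightarrow g\in H$'' is just the contrapositive of the paper's argument). Your added checks that the code lies in $C=\dcl(C)$ and that $\fix(H)=\dcl(Ae)$ are harmless refinements of details the paper leaves implicit.
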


\begin{proof}
 We need to show that $\fix(\fix(H)) = H$ for any $H$, and that $\fix(\fix(B)) = B$ for any $B$. The proof relies on the fact that the restriction map
$\aut(\mm/A) \rightarrow \aut(C/A)$ is well-defined and surjective.

 The second part is easier. Clearly, it suffices to show that $\fix(\fix(B)) \subset \dcl(B)$. Suppose that $c \notin \dcl(B)$; then there is some automorphism $\sigma \in \aut(\mm/B)$ such that $\sigma(c) \neq c$. Abusing notation we also denote the restriction of $\sigma$ to $C$ by $\sigma \in \aut(C/A)$. Since $\sigma \in \fix(B)$ but $\sigma(c) \neq c$, this $\sigma$ witnesses that $c \notin \fix(\fix(B))$.

For the first part, it suffices to find some tuple $b$ of elements
of $\fix(H)$ such that $g(b) \neq b$ for any $g \notin H$. This
$b$ will be a code of the orbit of a generator of $C/A$ under $H$.
Let $c$ be such that $C = \dcl(Ac)$, let $F := \{ h(c) \sthat h\in
H \}$, and let $b$ be the code of $F$. Note that $H$ acts
(faithfully transitively) on $F$, so in particular $h(F) = F$ for
all $h \in H$, so the entries of $b$ are in $\fix(H)$. On the
other hand, take some $g \in G$ that is not in $H$. Note that
$g(c) = h(c)$ implies that $g=h$, so $g(c) \notin F$. But $c \in
F$, so $g(c) \in g(F)$. Thus, $g$ does not leave $F$ invariant,
and therefore does not fix $b$ the code of $F$.
\end{proof}

\section{Further developments}

Since Shelah's invention of imaginaries in 1978 and Poizat's
seminal paper \cite{poizat} in 1983, much more has been done with
automorphism groups in model theory. Already \cite{poizat} speaks
about the absolute Galois group of $A$ acting not only on the
elements of the algebraic closure $\bar{A}$  (which correspond to
the algebraic types over $\bar{A}$) but also on the whole Stone
space $S(\bar{A})$ of types over $\bar{A}$. Another recent paper \cite{CaFa} formulates precisely how much elimination of imaginaries is necessary and sufficient for the Galois correspondence in Theorem \ref{thethm}.

Sometimes, the Galois group appears as a definable \emph{binding
group} inside the model. This already occurs in the earliest
application of this abstract theory, to linear differential
equations in \cite{poizat}. When anything remotely like this
happens, it is extremely useful, for example allowing one to
extract a definable field out of a definable group action. See
\cite{poizatgroups} or \cite{bigPillay} for an introduction to
binding groups. In some sense, this gives a definable
representation of the Galois group.

Algebraists have not been idle either: Galois would hardly
recognize the Galois theory in modern algebra textbooks. Galois
Theory was fully developed by Weber \cite{Weber}, Steinitz
\cite{Steinitz}, and Artin \cite{Artin}. The Galois theory of
infinite extensions was initiated by Krull \cite{Krull}. In these
developments a central role is played by the simple observation
that any extension field of finite degree is a finite dimensional
vector space over the ground field. This opens the way to import
ideas and techniques from linear algebra. Of course none of this
happens on our level of generality. The notion of Krull topology,
however, carries over to our setting with little modification.
Weil \cite{Weil} invented universal domains, of which our monster
models are a natural generalization, and also introduced
\emph{fields of definition}, which have an exact analog in
\emph{canonical parameters} of definable sets. Here we should also
mention the developments growing out of the introduction of Galois
theory in the setting of commutative rings
\cite{commutative-rings}, as well as Rasala's Inseparable
Splitting Theory \cite{Rasala}, as notable progress in the
algebraic aspects of the subject.

Many of the connections of Galois theory to number theory and algebraic geometry are via homological algebra. For example, Galois cohomology, at least in the commutative setting, is now an important tool in algebraic number theory and class field theory (see e.g. \cite{Neukirch}). Cohomological methods combined with representation theoretic, analytic, and algebro-geometric techniques have produced astonishing results in number theory (e.g. \cite{Wiles} and \cite{Harris-Taylor}). Non-abelian Galois cohomology is considerably more difficult to handle, and for that reason has not found much popularity among the mathematical public; though, the non-commutative $H^1$ is now routinely used in questions of classification and forms (see e.g. \cite{Satake}). Giraud's book \cite{Giraud} contains a comprehensive study of general non-abelian cohomology. Some of these notions have been put into the language of model theory in \cite{cohoanand} and most recently in \cite{evanspastori}. Further exploration of the connections between model theory and higher non-abelian cohomology seems rather inevitable, as no land this accessible and this pristine can keep off intruders for long.

\bibliography{galbib}{}
\bibliographystyle{plain}
\end{document}